\documentclass[reqno]{amsart}

\usepackage{amsmath, , amsfonts, amssymb}
\usepackage[english]{babel}
\usepackage[applemac]{inputenc}
\usepackage[T1]{fontenc}
\usepackage{graphicx,pdflscape}
\usepackage{layout}
\usepackage[all]{xy}
\usepackage{hyperref}
\usepackage[font=small,labelsep=none]{caption}

\newtheorem{lem}{Lemma}
\newtheorem{thm}{Theorem}
\newtheorem{cor}{Corollary}[thm]

\theoremstyle{definition}
\newtheorem{defn}{Definition}

\theoremstyle{remark}

\title[Number of strings on essential tangle decompositions]{The number of strings on essential tangle decompositions of a knot can be unbounded}
\author{Jo\~{a}o Miguel Nogueira}
\address{CMUC, Department of Mathematics\\
   University of Coimbra\\
   Apartado 3008 EC Santa Cruz\\
   3001-501 Coimbra\\
   Portugal}
\email{nogueira@mat.uc.pt}

\thanks{This work was partially supported by the Centro de Matem\'{a}tica da
Universidade de Coimbra (CMUC), funded by the European Regional
Development Fund through the program COMPETE and by the Portuguese
Government through the FCT - Funda\c{c}\~{a}o para a Ci\^{e}ncia e a Tecnologia
under the project PEst-C/MAT/UI0324/2011.}

\subjclass[2010]{57M25, 57N10}

\begin{document}

\maketitle

\begin{abstract}
We construct an infinite collection of knots with the property that any knot in this family has $n$-string essential tangle decompositions for arbitrarily high $n$.
\end{abstract}

\section{Introduction}
A \textit{$n$-string tangle} $(B, \mathcal{T})$ is a ball $B$ together with collection of $n$ disjoint arcs $\mathcal{T}$ properly embedded in $B$, for $n\in \mathbb{N}$. We say that $(B, \mathcal{T})$  is \textit{essential}, if $n$ is $1$ and its arc is knotted\footnote{An arc of $\mathcal{T}$ is \textit{unknotted} if it co-bounds a disk embedded in $B$ together with an arc in $\partial B$, otherwise it is said to be \textit{knotted}.}, or if $n$ is bigger than $1$ and there is no properly embedded disk in $B$ disjoint from $\mathcal{T}$ and separating the components of $\mathcal{T}$ in $B$. Otherwise, we say that the tangle is \textit{inessential}. (See Figure \ref{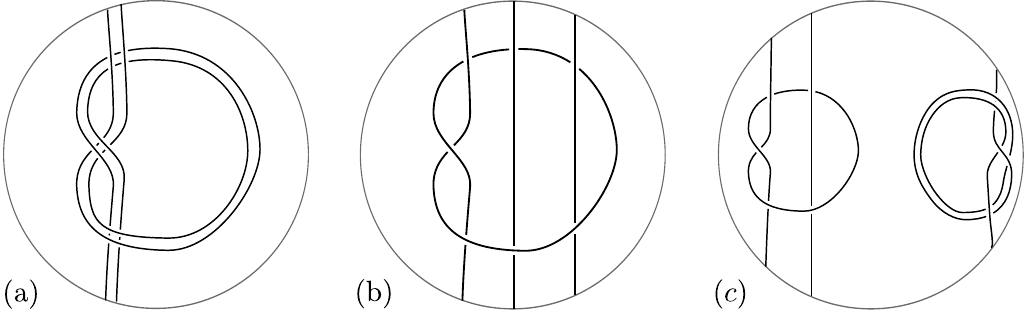} for examples.)\\
Let $K$ be a knot in $S^3$ and $S$ a $2$-sphere in general position with $K$. Each ball bounded by $S$ in $S^3$ intersects $K$ in the same number $n$ of arcs. So, these balls together with the arcs of intersection with $K$ are $n$-string tangles. In this case, we say that $S$ defines a \textit{$n$-string tangle decomposition} of $K$, and if both tangles are essential we say that the tangle decomposition of $K$ defined by $S$ is $\textit{essential}$. A knot is composite if, and only if, it has a $1$-string essential tangle decomposition, otherwise the knot is prime. Note also that $S$ defines an essential tangle decomposition for $K$ if, and only if, the intersection of $S$ with the exterior of $K$, $E(K)$\footnote{We denote the \textit{exterior} of a knot $K$, that is  $S^3-int\,N(K)$ where $N(K)$ is a regular neighborhood of $K$, by $E(K)$.}, is an essential surface in $E(K)$. (See Definition \ref{essential}.)\\

\begin{figure}[htbp]
\centering
\includegraphics{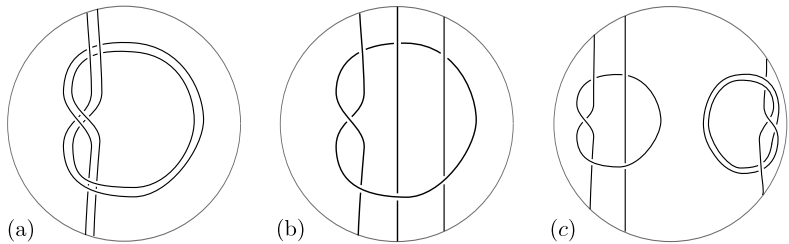}
\caption{: Examples of essential tangles, in (a) and (b), and an inessential tangle, in (c).}
\label{tangles.pdf}
\end{figure}

A tangle decomposition of a knot is natural and has been relevant for knot theory and its applications. The concept of ``tangle'' was first used in the work of Conway \cite{Conway}, where he defines and classifies ($2$-string) rational tangles and uses it as an instrument to list knots. The concept of essential tangle was first  used in \cite{Kir-Lick} where Kirby and Lickorish prove that any knot is concordant to a prime knot. They actually define \textit{prime tangle}, that is an essential tangle with no local knots\footnote{A tangle $(B, \mathcal{T})$ has no local knots if any $2$-sphere intersecting $\mathcal{T}$ transversely in two points bounds a ball in $B$ meeting $\mathcal{T}$ in an unknotted arc.}. Another example is the work of Lickorish in \cite{Lickorish} where he proves for instance that if a knot has a $2$-string prime tangle decomposition then the knot is prime. Tangles are also used in applied mathematics to study the DNA topology. The paper \cite{Buck} by Buck surveys the subject concisely, and also explains how tangles are useful to the study of the topological properties of DNA, an application pioneered by Ernst and Sumners in \cite{Ernst-Sumners}.\\

This paper addresses the question if the number of strings on essential tangle decompositions of a fixed knot is bounded. There are results showing some evidence for this to be true. For instance, knots with no closed essential surfaces \cite{CGLS}, tunnel number one knots \cite{Gordon-Reid} and free genus one knots \cite{Matsuda-Ozawa} have no essential tangle decompositions. There also are knots with an unique essential tangle decomposition \cite{Oz}. Furthermore, in Proposition 2.1 of \cite{Mizuma-Tsutsumi}, Mizuma and Tsutsumi proved that for a given knot the number of strings in essential tangle decompositions, without parallel strings\footnote{Two strings of a tangle in a ball $B$ are parallel if there is an embedded disk in $B$ co-bounded by these strings and two arcs in $\partial B$.}, is bounded. The proof of this result  allows a more general statement. That is, the number of strings that are not parallel to other strings in an essential tangle decomposition of a fixed knot is bounded. So, from this flow of results and intuition on essential tangle decompositions the following theorem and its corollary are surprising.

\begin{thm}\label{main}
There is an infinite collection of prime knots such that for all $n\geq 2$ each knot has a $n$-string essential tangle decomposition.
\end{thm}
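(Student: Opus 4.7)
The plan is to exhibit an explicit infinite family of prime knots $\{K_m\}_{m\geq 1}$ with the property that each $K_m$ admits, for every $n\geq 2$, a $2$-sphere $S_n\subset S^3$ meeting $K_m$ transversely in $2n$ points and defining an essential $n$-string tangle decomposition. The Mizuma--Tsutsumi bound quoted above forces the situation: in such a decomposition only boundedly many strings can fail to be parallel to other strings, so for large $n$ the decomposition must consist mainly of families of mutually parallel strings. Thus the construction must produce decompositions whose strings come in parallel families but remain collectively essential.

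A natural template is to begin with a prime $2$-string tangle $(B,T)$ and build $K_m$ by gluing $(B,T)$ to another essential tangle in a controlled way, where the complement of $B$ contains a ``cabling device'' (e.g.\ an annulus or an essential branched region) through which one of the strings winds many times. For each $n$, the sphere $S_n$ would be obtained by pushing a small sphere around this winding region; a generic such sphere intersects $K_m$ in $2n$ transverse points and cuts off a ball containing $n$ parallel-looking arcs. The parameter $m$ would then be varied (for instance by Dehn twisting along an incompressible torus, or by changing the prime building block) to produce infinitely many distinct knots.

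The heart of the argument is verifying that both tangles produced by $S_n$ are essential for every $n$. The side containing the prime building block should inherit essentiality directly from the construction, since a compressing disk there would, together with a suitable outermost-arc argument, produce a disk violating primeness of $(B,T)$. The side with $n$ parallel-looking strands is the delicate one: naively, $n$ parallel strings in a ball admit obvious separating disks. The key point is to arrange the strands so that each is linked with the cabling device in a way that any candidate separating disk $D$ would have to meet the device, and then to run a standard innermost-circle / outermost-arc analysis of $D$ against an auxiliary essential surface in $E(K_m)$ (built from the torus or annulus used to create the windings) to reach a contradiction.

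The main obstacle is precisely this essentiality verification on the ``parallel'' side, uniformly in $n$. Once this is done, primeness of $K_m$ follows from the existence of a prime tangle in any essential decomposition, and distinctness of the family $\{K_m\}$ can be detected by an invariant growing with $m$, such as genus, Alexander polynomial degree, or hyperbolic volume, depending on the precise construction.
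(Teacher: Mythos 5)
There is a genuine gap: your text is a strategy outline rather than a proof, and the step you yourself flag as ``the main obstacle'' --- showing that the side of $S_n$ containing $n$ mutually parallel strings is still an essential tangle, uniformly in $n$ --- is precisely where the paper's one essential idea lives, and you do not supply it. Your proposed mechanism (``arrange the strands so that each is linked with the cabling device'') does not work as stated: if the parallel strands are individually unknotted arcs that merely wind through an annulus or are encircled by a loop, then the parallelism disk between two adjacent strands is disjoint from the encircling loop, and cutting along it (or a disk close to it) separates the strands, so the tangle is inessential. The fix the paper uses is to make each of the parallel arcs a \emph{knotted} arc in its ball, by performing connected sums with nontrivial knots $C$ and $C_i$ along the edge through which the winding happens. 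Then a putative separating disk $D$, intersected with the parallelism disk $E$ between two separated strands $s_1,s_2$, yields (after innermost-curve/outermost-arc reductions) an outermost arc exhibiting $s_1$ or $s_2$ as boundary-parallel, contradicting knottedness. Without some device of this kind your essentiality verification on the parallel side cannot go through, and you have not named one.

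Two further points need more than you give them. First, you never produce an explicit knot or an explicit family of spheres $S_n$; ``pushing a small sphere around the winding region'' does not by itself guarantee that \emph{both} complementary tangles are essential for every $n$ (the paper achieves this by placing a connected sum of two torus knots on the boundary of a carefully embedded genus-two handlebody and controlling, via an incompressibility lemma for the planar surfaces $S_n\cap E(\Gamma_i)$ plus an outermost-arc analysis against the annuli $\partial^*C_{j,j+1}$ and $\Lambda_j$, every way a compressing disk could meet the handlebody). Second, primeness does not follow merely from ``the existence of a prime tangle in any essential decomposition'': the relevant theorem (Lickorish, Bleiler) requires that the $2$-string decomposition be by \emph{prime} tangles, i.e.\ essential and with no local knots, and verifying the absence of local knots is a separate argument that you omit. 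The distinctness-of-the-family step is fine in spirit, but it too depends on an actual construction in which something concrete varies with $m$.
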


\begin{cor}\label{cor}
There is an infinite collection of knots such that for all $n\geq 1$ each knot has a $n$-string essential tangle decomposition.
\end{cor}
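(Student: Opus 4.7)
The plan is to reduce Corollary \ref{cor} to Theorem \ref{main} via connect summing. Let $\{K_i\}_{i\in\mathbb{N}}$ be the infinite family of prime knots produced by Theorem \ref{main}, fix any nontrivial knot $J$ (say a trefoil), and consider the collection $\{K_i \# J\}_{i\in\mathbb{N}}$. This family is infinite by the uniqueness of prime decomposition of knots, so it suffices to verify that each $K_i \# J$ admits an essential $n$-string tangle decomposition for every $n\geq 1$.

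The case $n=1$ is immediate: since both summands are nontrivial, $K_i \# J$ is composite, and as recalled in the introduction this is equivalent to possessing a $1$-string essential tangle decomposition (given by the summing sphere itself). For $n\geq 2$, I would take a sphere $S$ realizing an essential $n$-string tangle decomposition of $K_i$, as provided by Theorem \ref{main}, and perform the connect sum with $J$ inside a small ball contained in the interior of one of the two tangle balls bounded by $S$, disjoint from $S$. Then $S$ remains a tangle-decomposing sphere for $K_i \# J$ with the same number of strings and the same endpoint pairing on $S$: one of the two tangles is literally unchanged, and the other has exactly one of its strings modified by tying $J$ into it.

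Essentiality on the unchanged side is free. On the modified side I would use the characterization given in the introduction: the planar surface $P = S \cap E(K_i)$ is essential in $E(K_i)$ by hypothesis, and $E(K_i \# J) = E(K_i)\cup_A E(J)$, glued along an annulus $A$ sitting on the peripheral torus of $E(K_i)$. The goal is to show $P$ stays essential in the glued exterior.

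This last step is the only real obstacle, and it is handled by a standard cut-and-paste argument. The annulus $A$ is incompressible in $E(K_i \# J)$ because $K_i \# J$ is composite (the summing sphere is essential), so for any hypothetical compressing or boundary-compressing disk $D$ for $P$ in $E(K_i \# J)$ one puts $D$ in general position with $A$ and applies innermost-disk and outermost-arc reductions: circles of $D\cap A$ are removed using incompressibility of $A$, while outermost arcs produce either a compressing or a boundary-compressing disk for $P$ that lies entirely inside $E(K_i)$, contradicting the essentiality of $P$ there. Some bookkeeping is needed because $\partial P$ lies on the very torus that is being cut along $A$, but once the outermost arcs are analyzed in cases according to whether their endpoints lie on $\partial P$ or on $\partial A \setminus \partial P$, the argument is routine.
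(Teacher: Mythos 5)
Your proposal is correct and follows exactly the paper's own route: form $K_i\# J$ for a fixed nontrivial knot $J$, get the $1$-string case from compositeness, and observe that the spheres $S_n$ from Theorem \ref{main} still give essential $n$-string decompositions of the connected sum. In fact you supply more detail than the paper, which asserts the survival of essentiality under the connected sum without the annulus cut-and-paste justification you sketch.
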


Essential surfaces are very important in the study of $3$-manifold topology. And as observed above, to each $n$-string essential tangle decomposition of a knot corresponds a meridional essential surface in the exterior of the knot, with $2n$ boundary components.  Therefore, from the results in this paper there are knots with meridional planar essential surfaces in their exteriors with all possible numbers of boundary components. Furthermore, from Lemma 1.2 in \cite{Bleiler}, the double cover of $S^3$ along these knots contains genus $g$ closed incompressible surfaces, meeting the fixed point set of the covering action in $2(g+1)$ points, and separating the double cover in irreducible and $\partial$-irreducible components, for all $g\geq1$.\\

The reference used for standard definitions and results of knot theory is Rolfsen's book \cite{Rolfsen}, and throughout this paper we work in the piecewise linear category.\\
In Section \ref{handlebody}, we show the existence of handlebody-knots (see Definition \ref{handlebody-knot}) with incompressible planar surfaces in their exteriors with $b$ boundary components, for all $b\geq 2$. In Section \ref{examples}, we use these handlebody-knots to prove Theorem \ref{main} and its corollary. The main techniques used are standard in $3$-manifold topology. Along the paper, the number of connected components of a topological space $X$ is denoted by $|X|$.

\section{Meridional incompressible planar surfaces in handlebody-knots complements}\label{handlebody}

To prove Theorem \ref{main} we use the correspondence between $n$-string essential tangle decompositions of a knot and meridional planar essential surfaces in the knot exterior. So, we start by defining these surfaces.

\begin{defn}\label{essential}
A \textit{planar surface} is a surface obtained from a $2$-sphere by removing the interior of a finite number of disks.\\
Let $H$ be a handlebody embedded in $S^3$.\\
A surface $P$ properly embedded in $E(H)=S^3-int\, H$ is \textit{meridional} if each boundary component of $P$ bounds a disk in $H$.\\
An embedded disk $D$ in $E(H)$ is a \textit{compressing disk} for $P$ if $D\cap P= \partial D$ and $\partial D$ does not bound a disk in $P$. We say that $P$ is \textit{incompressible} if there is no compressing disk for $P$ in $E(H)$.\\
An embedded disk $D$ in $E(H)$ is a \textit{boundary compressing disk} for $P$ if $\partial D\cap P=\alpha$, with $\alpha$ a connected arc not cutting a disk from $P$, and $\partial D-\alpha=\beta$ a connected arc in $\partial H$. We say that $P$ is \textit{boundary incompressible} if there is no boundary compressing disk for $P$ in $E(H)$.\\
The surface $P$ is $\textit{essential}$ if it is incompressible and boundary incompressible.
\end{defn}

In this section, we present handlebody-knots whose exteriors contain meridional incompressible planar surfaces with $n$ boundary components for any $n\geq 2$. This embedding will later be used in the proof of Theorem \ref{main}. We consider next the definition of handlebody-knot.

\begin{defn}\label{handlebody-knot}
A \textit{handlebody-knot} of genus $g$ in $S^3$ is an embedded handlebody of genus $g$ in $S^3$. A \text{spine} $\gamma$ of a handlebody-knot $\Gamma$ is an embedded graph in $S^3$ with $\Gamma$ as a regular neighborhood.
\end{defn}

Let $\Gamma$ be the genus two handlebody-knot $4_1$ from the list of \cite{hknot}, with spine $\gamma$, as in Figure \ref{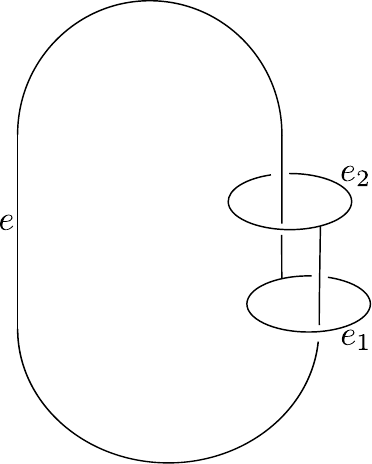}. Consider also a collection of distinct knots $C_i$, for $i\in \mathbb{N}$ and $C$ some other non-trivial knot. We work with $\gamma$ as if defined by two vertices, two loops $e_1$, $e_2$, one for each vertex, and an edge $e$ between the two vertices.\\

\begin{figure}[htbp]
\centering
\includegraphics[width=0.3\textwidth]{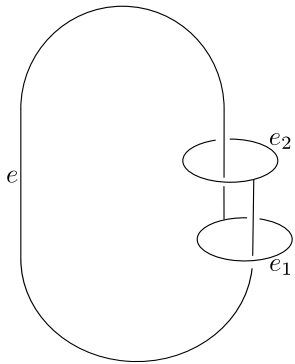}
\caption{: The spine $\gamma$ of the handlebody-knot $\Gamma$, with labels of the two loops $e_1$ and $e_2$, and the of edge $e$.}
\label{Hknot41.pdf}
\end{figure}

Consider two disjoint closed arcs $a_1$ and $a_2$ in $e$, as in Figure \ref{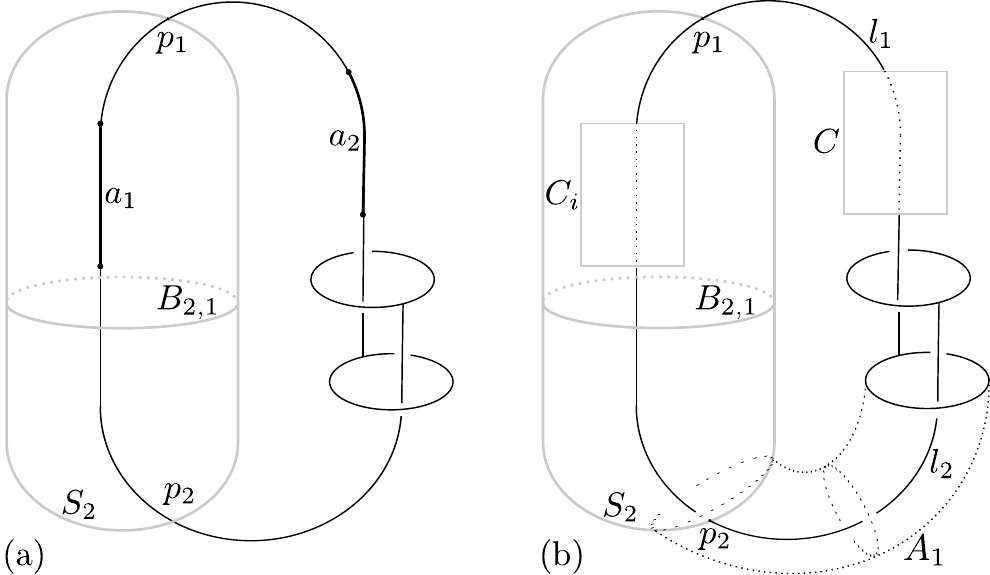}(a). In this figure we also have represented an embedded $2$-sphere $S_2$ in $S^3$ that intersects $\gamma$ in $e$ at two points, $p_1$ and $p_2$, and separates the arcs $a_1$ and $a_2$. Denote the ball bounded by $S_2$ containing a single component of $e$ by $B_{2,1}$ and the other by $B_{22}$. Denote by $l_1$, resp. $l_2$, the component of $B_{22}\cap\gamma$ that contains $e_1$, resp. $e_2$, and note that $l_j$ intersects $S_2$ at $p_j$, $j=1, 2$.\\
We proceed to a connected sum operation between $\gamma$ and the knots $C$ and $C_i$ along the arcs $a_1$ and $a_2$ with an usual connect sum operation. That is, we take a ball in $S^3$ intersecting $\gamma$ in $a_1$, and a ball in $S^3$ intersecting $C_i$ at a single unknotted arc. A connected sum operation is obtained by removing both balls and gluing  their boundaries through a homeomorphism in a way that the boundary points of $a_1$ are mapped to the boundary points of the chosen arc in $C_i$. A similar operation is obtained from the arc $a_2$ and $C$. From these operations we get the handlebody-knots as represented schematically in Figure \ref{Gammai.pdf}(b), that we denote by $\Gamma_i$ with a respective spine $\gamma_i$.
For each handlebody-knot $\Gamma_i$ we consider the swallow-follow torus $X_i$ defined by the connected sum of $C$ with $C_i$. A minimal JSJ- decomposition for the complement of $\Gamma_i$ is defined by the torus $X_i$, cutting from $E(\Gamma_i)$ the exterior of $C_i\# C$, and a JSJ-decomposition of $E(C_i\# C)$. Also, the torus $X_i$ cuts from $E(\Gamma_i)$ the only component obtained from the JSJ-decomposition containing the boundary of $E(\Gamma_i)$. Hence, from the unicity of minimal JSJ-decomposition of compact 3-manifolds, for any other minimal JSJ-decomposition of $E(\Gamma_i)$ the torus cutting the component with the boundary of $E(\Gamma_i)$ is isotopic to $X_i$. Consequently, if $\Gamma_i$ is ambient isotopic to $\Gamma_j$, $i\neq j$, the torus $X_i$ is isotopic to $X_j$, which means $E(C_i\# C)$ is ambient isotopic to $E(C_j\# C)$. This is a contradiction with the torus $C_i\# C$ and $C_j\# C$ being distinct. Then, the handlebody-knots $\Gamma_i$ are not ambient isotopic.\\

\begin{figure}[htbp]
\centering
\includegraphics[width=0.7\textwidth]{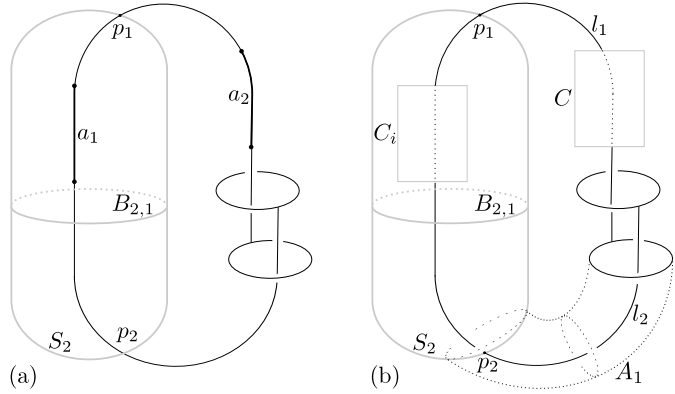}
\caption{: (a) The arcs $a_1$ and $a_2$ in $\gamma$ and the sphere $S_2$; (b) The spines $\gamma_i$ of the handlebody-knots $\Gamma_i$ and the annulus $A_1$. Note that $C_i$ and $C$ label the pattern of the respective knots.}
\label{Gammai.pdf}
\end{figure}

Both loops $e_1$ and $e_2$ co-bound an embedded annulus in $B_{2,2}$, parallel to the component of $e$ in $B_{2,2}$ each encircles, with interior disjoint from $\gamma_i$ and intersecting $S_2$ in the other boundary component. Consider such an annulus with a boundary component in $e_1$, denoted $A_{1}$, as it is illustrated in Figure \ref{Gammai.pdf}(b). We proceed with an isotopy of $\gamma_i$ along $A_{1}$ taking $l_1$ passing through $S_2$ and we obtain $\gamma_i$ as in Figure \ref{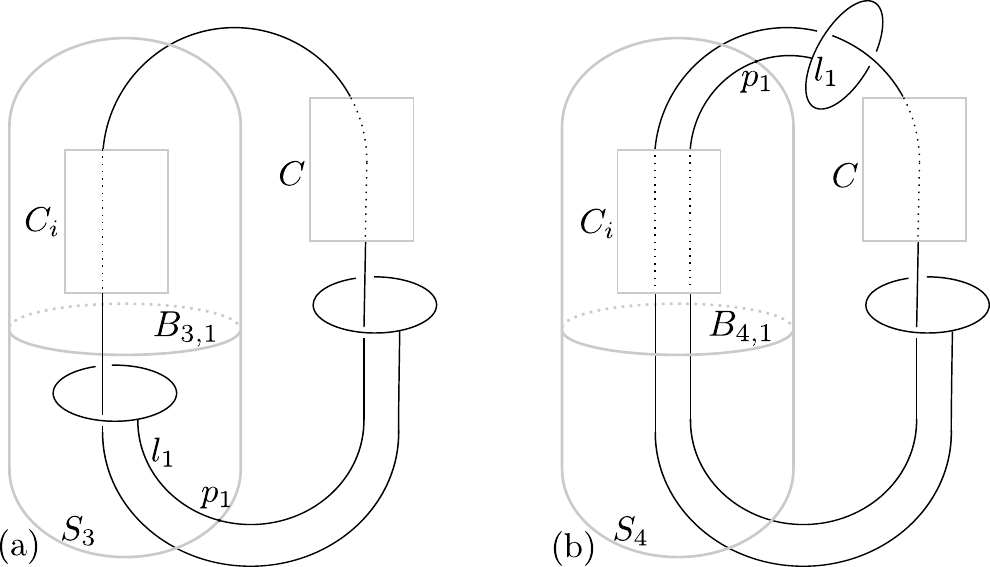}(a). We refer to this isotopy as an \textit{annulus isotopy} of $\gamma_i$. After this isotopy we denote $S_2$ by $S_3$, considering its relative position with $\Gamma_i$, and the respective balls it bounds by $B_{3,1}$ and $B_{3,2}$. We assume that $l_1$ intersects $S_3$ at $p_1$. Note that all intersections of $\gamma_i$ and $S_3$ are in the arc of $e$ between $p_1$ and $p_2$. Again, we consider an embedded annulus $A_2$ in $B_{3,1}$, co-bounded by $e_1$ and its intersection with $S_3$, parallel to the component of $e\cap B_{3,1}$ disjoint from $e_1$ and in the direction of the local knot $C_i$, following its pattern. By an annulus isotopy of $\gamma_i$  along $A_{2}$ taking $l_1$ passing through $S_3$ we obtain $\gamma_i$ as in Figure \ref{GammaiS34.pdf}(b). After this isotopy we denote $S_3$ by $S_4$, considering its relative position with $\Gamma_i$, and the respective balls it bounds by $B_{4,1}$ and $B_{4,2}$. The ball $B_{4,1}$ intersects $\gamma_i$ in two parallel arcs, and we still assume that $l_1\cap S_4$ is $p_1$. Note again that all intersections of $\gamma_i$ and $S_4$ are in the arc of $e$ between $p_1$ and $p_2$.\\

\begin{figure}[htbp]
\centering
\includegraphics[width=0.7\textwidth]{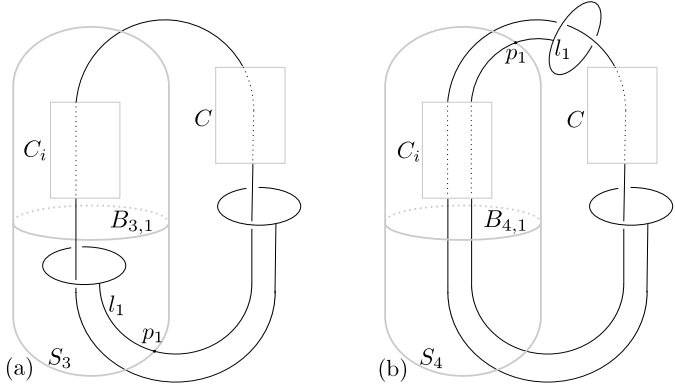}
\caption{: The spine $\gamma_i$ after one, (a), and two, (b), annulus isotopies and the spheres $S_3$ and $S_4$.}
\label{GammaiS34.pdf}
\end{figure}

For a canonical position, we isotope $e_1$ along the component of $e\cap B_{4,2}$, disjoint from $e_1$ and $e_2$, encircling $l_2$. (See Figure \ref{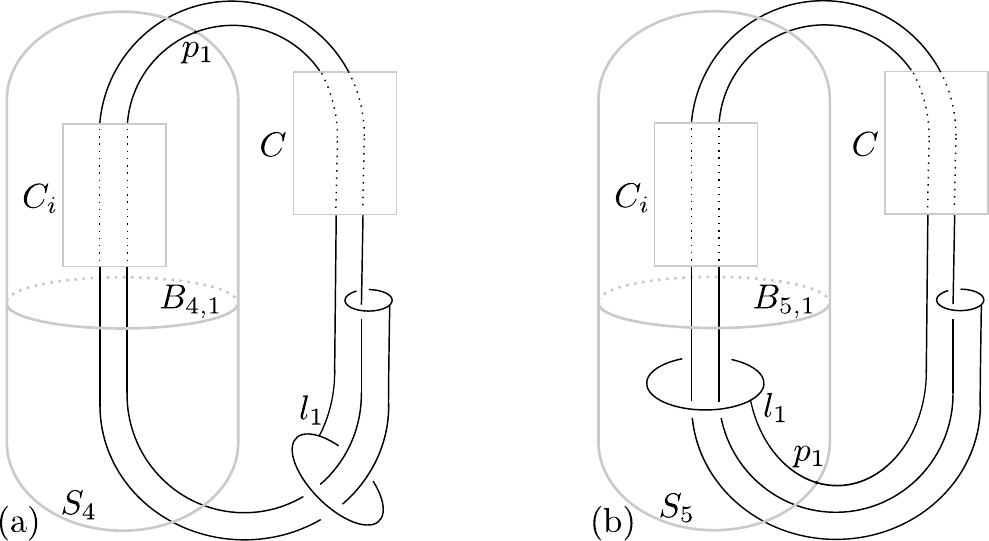}(a).) We can now continue the previous process. Consider again an annulus $A_3$ in $B_{4,2}$, co-bounded by $e_1$ and its intersection with $S_4$, parallel to the components of $e\cap B_{4,2}$ other than $l_1$, and in the opposite direction of the local knot $C$. By an annulus isotopy of $\gamma_i$ along $A_3$ taking $l_1$ passing through $S_4$ we obtain $\gamma_i$ as in Figure \ref{GammaiS45.pdf}(b). After this isotopy we denote $S_4$ by $S_5$, considering its relative position with $\Gamma_i$, and the respective balls it bounds by $B_{5,1}$ and $B_{5,2}$. Again, $l_1$ intersects $S_5$ at $p_1$, and all intersections of $S_5$ with $\gamma_i$ are in the arc of $e$ between $p_1$ and $p_2$. For the next step proceed with an annulus isotopy along an annulus $A_4$ in $B_{5,1}$ co-bounded by $e_1$, parallel to the components of $e\cap B_{5,1}$ disjoint from $e_1$, in the direction of the local knot $C_i$, following its pattern.\\

\begin{figure}[htbp]
\centering
\includegraphics[width=0.7\textwidth]{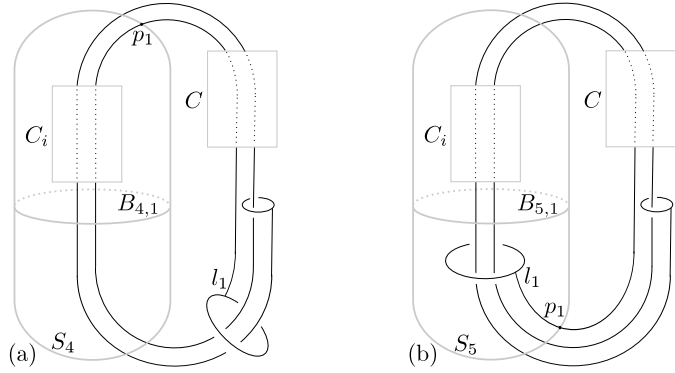}
\caption{: In (a) we have the spine $\gamma_i$ of Figure \ref{GammaiS34.pdf}(b) in a canonical position, and in (b) we have $\gamma_i$ after another annulus isotopy.}
\label{GammaiS45.pdf}
\end{figure}

After $2(k-1)$, $k=1, 2, \ldots$, annulus isotopies as the ones explained above we get $\gamma_i$ as in Figure \ref{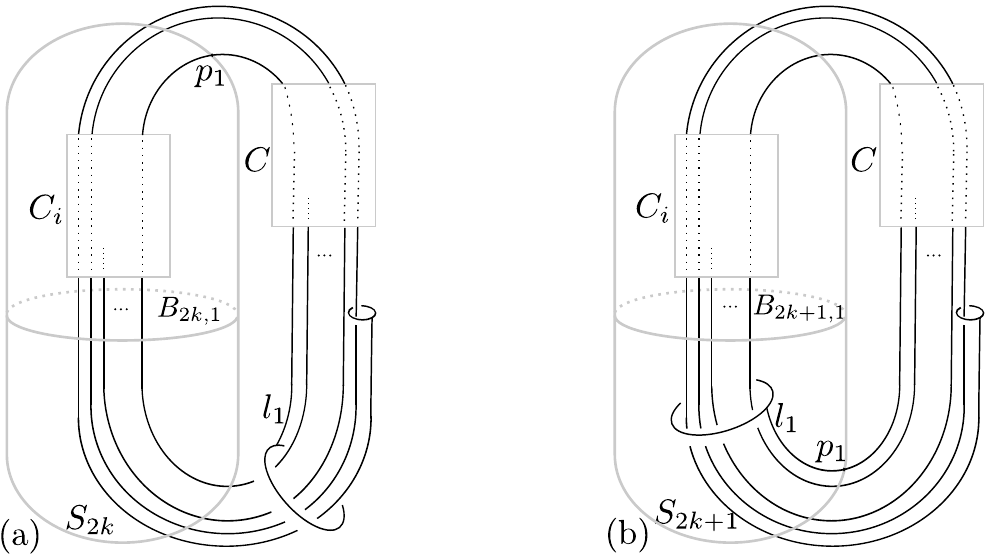}(a). From $S_2$ we obtain $S_{2k}$, and the respective balls it bounds, $B_{2k,1}$ and $B_{2k,2}$. The ball $B_{2k,1}$ intersects $\gamma_i$ in $k$ parallel arcs with the pattern of $C_i$, and the ball $B_{2k,2}$ intersects $\gamma_i$ in $k-2$ parallel arcs with the pattern of $C$, another arc with the pattern of $C$ encircled by $l_2$, and $l_1$ that encircles all these other components.\\
After $2k-1$, $k=1, 2, \dots$, annulus isotopies we obtain $\gamma_i$ as in Figure \ref{GammaiSn.pdf}(b). From $S_2$ we obtain $S_{2k+1}$, and the respective balls it bounds, $B_{2k+1,1}$ and $B_{2k+1,2}$. The ball $B_{2k+1,1}$ intersects $\gamma_i$ in $n$ parallel arcs with the pattern of $C_i$ and $l_1$ encircling these arcs, and the ball $B_{2k+1,2}$ intersects $\gamma_i$ in $k-1$ parallel arcs with the pattern of $C$, together with another arc with the pattern of $C$ and $l_2$ which encircles this arc.\\
Note after each isotopy we assume that $l_j$ intersects $S_n$, $n=2, 3, \ldots$, in $p_j$ and that all points of $S_n\cap \gamma_i$ are in the arc between $p_1$ and $p_2$ in $e$.\\

\begin{figure}[htbp]
\centering
\includegraphics[width=0.7\textwidth]{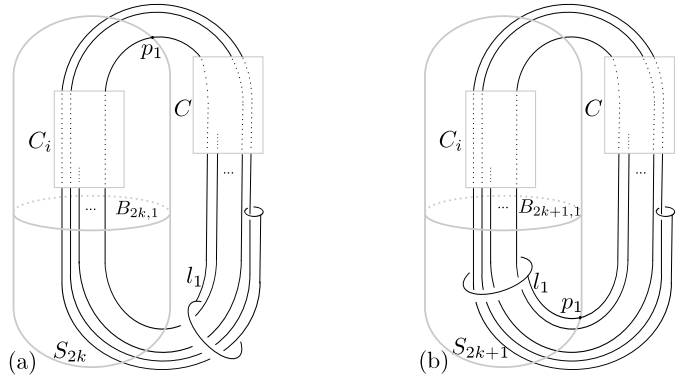}
\caption{: The spine $\gamma_i$ after an even number, in (a), and an odd number, in (b), of annulus isotopies and the corresponding spheres $S_{2k}$ and $S_{2k+1}$, $k\in \mathbb{N}$.}
\label{GammaiSn.pdf}
\end{figure}

We denote $S^3-int\, \Gamma_i$ by $E(\Gamma_i)$, and $S^3-\gamma_i$ by $E(\gamma_i)$. Let $Q_n$, for $n=2, 3, \ldots$, be the intersection of $S_n$ with $E(\Gamma_i)$ in $S^3$.

\begin{lem}\label{Qn}
The surface $Q_n$ is incompressible in $E(\Gamma_i)$.
\end{lem}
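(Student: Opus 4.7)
The plan is to argue by contradiction: assume a compressing disk $D\subset E(\Gamma_i)$ for $Q_n$ exists, and derive a contradiction from the cabled local-knot structure of $\gamma_i$ on the two sides of $S_n$.

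Step~1 is to turn the compression into an auxiliary $2$-sphere in $S^3$. Since $Q_n$ sits inside the sphere $S_n$, the curve $\partial D$ bounds two disks in $S_n$; let $\Delta$ be the one containing fewer intersection points with $\gamma_i$. Because $\partial D$ is essential on $Q_n$, $\Delta$ contains $k$ intersection points with $1\le k\le n-1$, all lying on the single subarc of the edge $e$ between $p_1$ and $p_2$. Then $\Sigma:=D\cup\Delta$ is an embedded $2$-sphere in $S^3$ meeting $\gamma_i$ transversely in exactly those $k$ points, and $\Sigma$ bounds a ball $B^{\ast}$ lying inside either $B_{n,1}$ or $B_{n,2}$. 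Hence $D$ would play the role of a splitting disk for the tangle on that side.

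Step~2 is to rule out any such splitting disk. On the $B_{n,1}$ side, the construction of $S_n$ via annulus isotopies parallel to the pattern of $C_i$ makes $\gamma_i\cap B_{n,1}$ into a collection of parallel strands, each knotted with the non-trivial pattern of $C_i$ (together with the surrounding loop $l_1$ for odd $n$); on the $B_{n,2}$ side the analogous statement holds with $C$ in place of $C_i$, together with the loops $l_1$ and $l_2$ inherited from the spine $\gamma$ of the handlebody-knot $4_1$. My approach is to take a parallelism disk $E$ between two adjacent strands, put $D$ in general position with $E$, and eliminate $D\cap E$ by standard innermost-circle and outermost-arc reductions. The surviving disk $D$ would then separate two adjacent strands across a subball inside which the union of those strands forms a $C_i$- or $C$-knotted arc, forcing it to be unknotted and contradicting the non-triviality of $C_i$ and $C$. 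Splittings that isolate a loop $l_1$ or $l_2$ from the cable strands are ruled out because they would produce a reducing sphere for $\Gamma$ meeting its spine only along $e$, which is impossible for the genus-two handlebody-knot $4_1$.

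The main obstacle is the innermost/outermost reduction in Step~2: one must keep careful track of how $D$ meets the family of parallelism disks and verify that the non-triviality of $C_i$ (respectively $C$) can indeed be invoked at the end. The bookkeeping is delicate because the arrangement of strands in $B_{n,1}$ and $B_{n,2}$ depends on the parity of $n$ and on the role of the encircling loops, so the argument naturally splits into the two cases described by Figure~\ref{GammaiSn.pdf}(a) and (b).
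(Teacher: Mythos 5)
Your overall strategy matches the paper's: reduce to ruling out a splitting disk for the tangle on each side of $S_n$, play a parallelism disk $E$ between two of the knotted strands against the compressing disk $D$, and treat the encircling loops $l_1,l_2$ separately via the knottedness of the handlebody-knot. But the central step, as written, does not close. You propose to ``eliminate $D\cap E$'' by innermost-circle and outermost-arc reductions and then read a contradiction off the ``surviving disk.'' That elimination is impossible: writing $\partial E=s_1\cup\alpha_1\cup s_2\cup\alpha_2$ with $\alpha_1,\alpha_2\subset S_n$, the disk $E$ joins two strands lying on opposite sides of the separating disk $D$, so $D\cap E$ must always retain arcs running from $\alpha_1$ to $\alpha_2$; the standard reductions only remove circles and arcs with both endpoints on the same $\alpha_j$. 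The missing idea is what to do with the arcs that remain: an outermost such arc cuts off a subdisk of $E$ containing $s_1$ (or $s_2$), which glued to a subdisk of $D$ exhibits that strand as parallel into $S_n$, i.e.\ as an unknotted arc of its tangle, contradicting the fact that each strand carries the non-trivial pattern of $C_i$ or $C$. Your sentence ``the union of those strands forms a $C_i$- or $C$-knotted arc, forcing it to be unknotted'' does not describe this mechanism, and you yourself flag the reduction as ``the main obstacle,'' so the decisive step is asserted rather than proved.

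The treatment of the loops is also too loose to stand. A sphere meeting the spine transversely in a single point of $e$ is not in itself impossible (a small sphere about a point of $e$ is one); the content is in what such a sphere would bound. The concrete contradiction is that if $D$ splits off $l_1$ (or $l_2$) from the strands it encircles, then the unknotted loop $e_1$ (or $e_2$) bounds a disk in the complement of $\gamma_i$, which is incompatible with $\Gamma_i$ being a knotted handlebody-knot. You should state this, and also cover the configuration, occurring for even $n$, in which $D$ separates $l_1$ and $l_2$ together from the remaining parallel strands; there the same conclusion is reached because $e_1$ already bounds a disk disjoint from $l_2$.
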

\begin{proof}
As $\Gamma_i$ is a regular neighborhood of $\gamma_i$, if $Q_n$ is compressible in $E(\Gamma_i)$ then $S_n$ is compressible in $E(\gamma_i)$. Hence, it suffices to prove that $S_n$ is incompressible in $E(\gamma_i)$.\\
1. Suppose $n$ is even. Then $S_n$ is as in Figure \ref{GammaiSn.pdf}(a).\\
(i) In this case, the ball $B_{n,1}$ intersects $\gamma_i$ in a collection of $k=\frac{n}{2}$ parallel knotted arcs. Then $(B_{n,1}, B_{n,1}\cap \gamma_i)$ is an essential tangle. In fact, suppose there is a compressing disk $D$ for $S_n$ in $B_{n,1}-(B_{n,1}\cap \gamma_i)$. Then $D$ separates the arcs $B_{n,1}\cap \gamma_i$ into two collections. Let $s_1$ and $s_2$ be two arcs in $B_{n,1}$ which are separated by $D$. As $s_1$ and $s_2$ are parallel there is a disk $E$ with boundary $s_1\cup s_2$ and two arcs in $S_n$, $\alpha_1$ and $\alpha_2$, each with one end in $s_1$ and the other in $s_2$. Consider $D$ and $E$ in general position and suppose that $|D\cap E|$ is minimal. If $D$ intersects $E$ in simple closed curves or in arcs with both ends in $\alpha_1$ or both in $\alpha_2$, by an innermost arc type of argument we can reduce $|D\cap E|$, which is a contradiction. Therefore, all arcs of $D\cap E$ have one end in $\alpha_1$ and the other end in $\alpha_2$. Hence, both $s_1$ and $s_2$ are parallel to outermost arcs of $D\cap E$ in $D$, which implies that $s_1$ and $s_2$ are parallel to $S_n$. This is a contradiction because the arcs $s_1$ and $s_2$ are knotted by construction.\\
(ii) If $n\leq 4$ then the ball $B_{n,2}$ intersects $\gamma_i$ in $l_1$, $l_2$, and when $n=4$ also in an arc encircled by both $l_1$ and $l_2$. In this case if there is a compressing disk for $S_n$ in $B_{n,2}-(B_{n,2}\cap \gamma_i)$ it separates the components $l_1$ or $l_2$ from the other components. This implies that $e_1$ or $e_2$ bound a disk in the complement of $\gamma_i$, which is a contradiction with $\Gamma_i$ being a knotted handlebody-knot. Otherwise, suppose that $n>4$. Thus, $B_{n,2}$ intersects $\gamma_i$  in $\frac{n}{2}-2$ parallel arcs with the pattern of $C$, another arc with the pattern of $C$ encircled by $l_2$, and the component $l_1$ that encircles the arc encircled by $l_1$ and the $\frac{n}{2}-2$ parallel arcs. With exception to $l_1$ and $l_2$, all other arcs are parallel as properly embedded arcs in $B_{n, 2}$. Thus, if a compressing disk for $S_n$ in $B_{n,2}-(B_{n,2}\cap \gamma_i)$ separates these arcs, following an argument as in 1(i) we have a contradiction with these arcs being knotted. Therefore, a compressing disk for $S_n$ in $B_{n,2}-(B_{n,2}\cap \gamma_i)$ separates a single component $l_1$ or $l_2$ from all the other components, or it separates both components $l_1$ and $l_2$ from the other parallel arcs. As $e_1$ bounds a disk disjoint from $l_2$, in both cases $e_1$ bounds a disk in the complement of $\gamma_i$, which is a contradiction with $\Gamma_i$ being a knotted handlebody-knot.\\

2. Suppose now that $n$ is odd. Then $S_n$ is as in Figure \ref{GammaiSn.pdf}(b).\\
(i) The ball $B_{n,1}$ intersects $\gamma_i$ in a collection of $\frac{n-1}{2}$ parallel arcs and $l_1$ which encircles these arcs. If there is a compressing disk $D$ of $S_n$ in $B_{n,1}-B_{n,1}\cap \gamma_i$ separating the parallel arcs, following an argument as in 1(i) we have a contradiction with these arcs being knotted. If $D$ separates the component $l_1$ from the other components, following an argument as in 1(ii) we have a contradiction with $\Gamma_i$ being a knotted handlebody-knot.\\
(ii) If $n=3$ the ball $B_{n,2}$ intersects $\gamma_i$ in an arc with pattern $C$ and $l_2$ which encircles the arc. If there is a compressing disk for $S_n$ in $B_{n,2}-(B_{n,2}\cap\gamma_i)$ in this case, then it separates the component $l_2$ from the arc with pattern $C$. From the same argument used in 1(ii) we have a contradiction with $\Gamma_i$ being a knotted handlebody-knot. If $n>3$ then the ball $B_{n,2}$ intersects $\gamma_i$ in $\frac{n-1}{2}$ parallel arcs, and $l_2$ which encircles one of the previous arcs. Without considering $l_2$, if a compressing disk for $S_n$ in $B_{n,2}-(B_{n,2}\cap \gamma_i)$ separates the parallel arcs then following an argument as in 1(i) we have a contradiction with the arcs being knotted. Then, if $S_n$ has a compressing disk in $B_{n,2}-(B_{n,2}\cap \gamma_i)$ then this disk isolates the component $l_2$ from the other components, and following the argument as in 1(ii) we have a contradiction with $\Gamma_i$ being a knotted handlebody-knot.
\end{proof}

The surface $Q_n$ is boundary compressible in $E(\Gamma_i)$, as there are boundary compressing disks over the regular neighborhoods of $l_1$ and $l_2$. However, our construction of the handlebody-knots $\Gamma_i$ could have been made in a way that the surfaces $Q_n$ are incompressible and boundary incompressible in their complements. For that purpose, we could do a connect sum of $\gamma_i$ with two knots along two arcs in $e_1$ and $e_2$. After this operation, there won't be boundary compressing disks of $Q_n$ over the regular neighborhoods of $l_1$ and $l_2$ in $E(\Gamma_i)$. And as these are the only possible boundary compressing disks, because all other components $\gamma_i-\gamma_i\cap S_n$ correspond to knotted arcs in their respective balls, after these connected sums the surfaces $Q_n$ would also be boundary incompressible in the complement of the handlebody-knots. But for the purpose of this paper, we will use the handlebody-knots $\Gamma_i$.

\section{Knots with essential tangle decompositions with arbitrarily high number of strings}\label{examples}

In this section we use the handlebody-knots $\Gamma_i$ to construct infinitely many examples of knots with essential tangle decompositions for all numbers of strings.\\
Let $N_1$ and $N_2$ be torus knots in the boundary of the solid tori $T_1$ and $T_2$ (that we assume to be in different copies of $S^3$). Consider $B_i$ a regular neighborhood of an arc of $N_i$ intersecting $T_i$ at a ball, for $i=1, 2$. We isotope $B_i$ and $B_i\cap N_i$ away from the interior of $T_i$ such that $B_i$ intersects $T_i$ at a disk, for $i=1, 2$. We proceed with a connect sum of $N_1$ and $N_2$ by removing the interior of $B_1$ and attaching the exterior of $B_2$ in a way that the disks $B_1\cap T_1$ and $B_2\cap T_2$ are identified. Hence, the knot $N_1\# N_2$, denoted by $K$, is in the boundary of a genus two handlebody $H$, obtained by gluing $T_1$ and $T_2$ along a disk in their boundaries. We denote the identification disk of $T_1$ and $T_2$ in $H$ by $D$. In Figure \ref{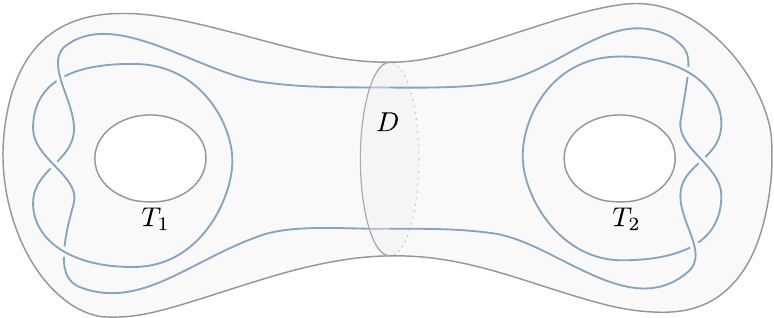} we have the example of this connected sum with two trefoils, that we will use as reference for the remainder of the paper.

\begin{figure}[htbp]
\centering
\includegraphics[width=0.5\textwidth]{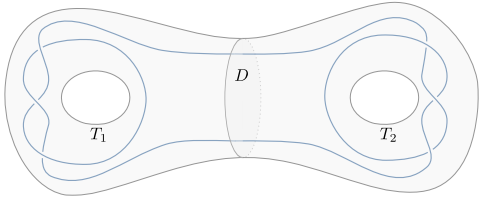}
\caption{: The handlebody $H$ with the connected sum of two trefoil knots.}
\label{K.pdf}
\end{figure}

\noindent Consider disks $D_1$ and $D_2$ parallel to $D$ in $H$, such that the cylinder $C_{1,2}$ cut by $D_1\cup D_2$ from $H$ intersects $K$ in two parallel arcs, each with one end in $D_1$ and the other in $D_2$. We also keep denoting by $T_1$ and $T_2$ the solid torus cut from $H$ by $D_1$ and $D_2$, respectively.  (See Figure \ref{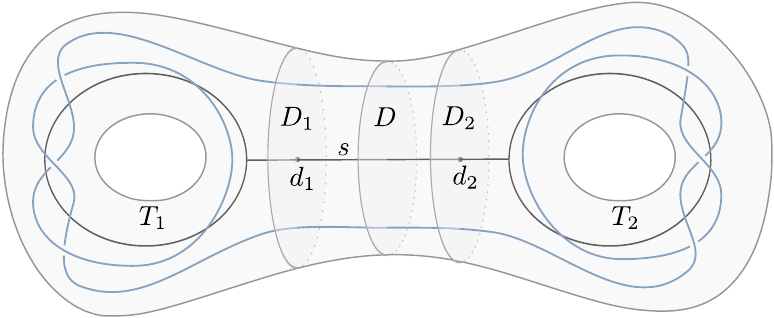}.) Let $s$ be a spine of $H$ that intersects $C_{1,2}$ in a single arc. We denote by $d_i$ the point $D_i\cap s$, and by $t_i$ the intersection of $s$ with $T_i$, for $i=1, 2$.

\begin{figure}[htbp]
\centering
\includegraphics[width=0.5\textwidth]{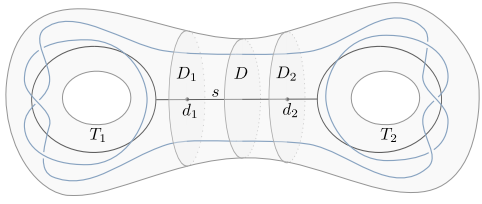}
\caption{: The handlebody $H$ and the spine $s$ with the connected sum of two trefoil knots.}
\label{Kspine.pdf}
\end{figure}

\noindent We now embed the knot $K$ in $\Gamma_i$ as follows. Consider an embedding $h_i$ of $H$ in $S^3$ taking $H$ homeomorphically to $\Gamma_i$, such that $h_i(s)=\gamma_i$, $h_i(d_j)=p_j$, $h_i(t_j)=l_j$ and also that $h_i(T_j)=L_j$, for $j=1, 2$.

\begin{proof}[Proof of Theorem \ref{main}]
Denote by $K_i$ the knots $h_i(K)$, $i\in \mathbb{N}$, for a fixed knot $K$. As to prove the handlebody-knots $\Gamma_i$ being distinct, let $X_i$ be the torus cutting from $E(K_i)$ the exterior of $C_i\# C$. The component cut by $X_i$ from $E(K_i)$ containing the boundary torus is the same for every knot $K_i$. Hence, from the unicity of minimal JSJ-decomposition of compact 3-manifolds, if two knots $K_i$ and $K_j$ are ambient isotopic the tori $X_i$ and $X_j$ are also ambient isotopic, contradicting $C_i\#C$ and $C_j\#C$ being distinct. Then, the knots $K_i$ define a collection of distinct knots.\\
To prove the statement of the theorem, we will show that the spheres $S_n$, $n\geq 2$, define  $n$-string essential tangle decomposition for the knots $K_i$, and that these knots are prime.\\

We start by proving that $S_n$ defines an $n$-string essential tangle decomposition of $K_i$. Let $E(K_i)$ be the exterior of $K_i$ in $S^3$, that is $S^3-int\, N(K_i)$, and let $P_n$ be the intersection of $S_n$ with $E(K_i)$, for a fixed $n$. To prove that $S_n$ defines an essential tangle decomposition for $K_i$, we need to prove that $P_n$ is essential in $E(K_i)$, \textit{i.e.} that $P_n$ is incompressible and boundary incompressible.\\
First, we observe that $P_n$ is boundary incompressible. In fact, as the strings of $K\cap B_{n,i}$ in $B_{n,i}$, $i=1, 2$, are knotted, there is no boundary compressing disk for $P_n$ in $E(K_i)$.\\
Now we prove that $P_n$ is incompressible in $E(K_i)$. Let $\Delta_j$, $j=1, \ldots, n$, be the disks of intersection between $\Gamma_i$ and $S_n$ with $\Delta_1=L_1\cap S_n$ and $\Delta_n=L_2\cap S_n$. Denote by $C_{j, j+1}$ the cylinder cut by $\Delta_j\cup \Delta_{j+1}$ from $\Gamma_i$. Denote also by $\partial^* C_{j, j+1}$ the annulus $C_{j, j+1}\cap \partial \Gamma_i$, that is $\partial C_{j, j+1}-(\Delta_j\cup \Delta_{j+1})$. Note that $C_{j, j+1}\cap K$ is a collection of two arcs parallel to $\partial^* C_{j, j+1}$, each with one end in $\Delta_j$ and the other in $\Delta_{j+1}$. Let us consider also $\partial^* L_1$ and $\partial^* L_2$ to denote $\partial L_1 - \Delta_1$ and $\partial L_2 - \Delta_n$. Furthermore, we denote by $s_j$ the string component, of the tangle decomposition of $K_i$ defined by $S_n$, in $L_j$, $j=1, 2$. Note that $s_j$ is parallel to $\partial^* L_j$. We isotope $s_j$ into $\partial^* L_j$ and denote the annulus $\partial^* L_j\cap E(K_i)$ by $\Lambda_j$.\\
Suppose that $P_n$ is compressible in $E(K_i)$ with $D$ a compressing disk, properly embedded in $B_{n,1}$ or $B_{n,2}$, in general position with $\Gamma_i$.  If $D$ is disjoint from $\Gamma_i$ we have a contradiction with Lemma \ref{Qn}. In this way, we assume that $D$ intersects $\Gamma_i$ and that $|D\cap \partial \Gamma_i|$ is minimal over all isotopy classes of compressing disks of $P_n$ in $E(K_i)$.\\ 
In particular, assume that $D$ intersects an annulus $\partial^* C_{j, j+1}$. If $D\cap \cup_{j=1}^{n-1}\partial^* C_{j, j+1}$ contains a simple closed curve or an arc with both ends in the same disk of $\Gamma_i\cap S_n$, by considering an outermost one between such curves and arcs in $\partial^* C_{j, j+1}$, and by cutting and pasting along the disk it bounds or co-bounds, we get a contradiction with the minimality of $|D\cap \partial \Gamma_i|$. Thus, $D\cap \cup_{j=1}^{n-1}\partial^* C_{j, j+1}$ is a collection of arcs with ends in distinct disks of $\Gamma_i\cap S_n$. Consider an outermost arc of $D\cap \cup_{j=1}^{n-1}\partial^* C_{j, j+1}$ in $D$, say $a$, and, without loss of generality, suppose it belongs to $\partial^* C_{j, j+1}$. The arc $a$ is parallel to a string of the tangle defined by $S_n$ that is in $C_{j, j+1}$, which contradicts the fact that all strings of the tangle decomposition of $K_i$ defined by $S_n$ are knotted. Consequently, we can assume that $D\cap \cup_{j=1}^{n-1}\partial^* C_{j, j+1}$ is empty.\\
Then, we are assuming that $D$ intersects $\partial \Gamma_i$ at $\partial^* L_1$ or $\partial^* L_2$, or more precisely at $\Lambda_1$ or $\Lambda_2$. We denote by $a_j$ and $a_j'$ the arcs of $\partial \Lambda_j$ parallel to $s_j$ in $\partial^* L_j$, and by $b_j$ and $b_j'$ the arcs cut by $\partial a_j$ and $\partial a_j'$, respectively, in the boundary of $\partial^* L_j$. The boundary components of $\Lambda_j$ are $a_j\cup b_j$ and $a_j'\cup b_j'$. Note that, as $D\cap s_j$ is empty, the disk $D$ is disjoint from $a_j$ and $a_j'$. Note also that $a_j\cup b_j$ is a torus knot in the torus $\partial^* L_j\cup (S_n-L_j\cap S_n)$, denoted $T_j'$. If $D$ intersects $\Lambda_j$ in innessential simple closed curves or arcs with both ends in $b_j$ or both ends in $b_j'$ then, by cutting and pasting along a disk cut by such curve or arc, we have a contradiction with the minimality of $|D\cap \partial \Gamma_i|$. If $D$ intersects $\Lambda_j$ in an essential simple closed curve then $a_j\cup b_j$ is parallel to a simple closed curve in $D$, which contradicts $a_j\cup b_j$ being knotted. Consequently, $D$ intersects $\Lambda_j$ in a collection of arcs each with one end in $b_j$ and the other in $b_j'$. Let $O$ be an outermost disk in $D$ cut by the arcs of $D\cap \Lambda_j$. Then, $O$ is a disk in a solid torus bounded by $T_j'$ and intersects the torus knot $a_j\cup b_j$ in $T_j'$ at a single point. As we are working in $S^3$, either $O$ is parallel to $T_j'$ or it is a meridian to a solid torus bounded by $T_j$. In either way, $O$ intersects any torus knot in $T_j'$ at least in two points, which contradicts $O$ intersecting $a_j\cup b_j$ once.\\
Therefore, we have that $P_n$ is essential in the complement of $K_i$, which ends the proof that $S_n$ defines an $n$-string essential tangle decomposition of $K_i$.\\


Now we prove that the knots $K_i$ are prime. From Theorem 1 of \cite{Bleiler}, if a knot has a $2$-string prime tangle decomposition, that is the tangles are essential and with no local knots, the knot is prime. We have that the knot $K_i$ has a $2$-string essential tangle decomposition defined by $S_2$. So, to prove that it is prime, we just need to show that the tangle decomposition defined by $S_2$ has no local knots. The ball $B_{2,1}$ intersects $K_i$ in two parallel arcs. Hence, if there is a $2$-sphere intersecting only one of the arcs at a single component, this component has to be unknotted. The ball $B_{2,2}$ intersects $\gamma_i$ in $l_1$ and $l_2$, then it intersects $K_i$ at two strings each with the pattern of a torus knot. Note that, even though the pattern of the knot $C$ is in $l_2$, it does not affect the topological type of the string in $L_2$. Suppose, the tangle in $B_{2,2}$ contains a local knot. That is, there is a ball $Q$ intersecting only one of the strings, and at a knotted arc. As the torus knots are prime, this knotted arc contains the all pattern of the string, that is the intersection of $Q$ and $B_{2,2}$ with this string is topologically the same. Therefore, as the strings in $B_{2,2}$ are parallel to the boundary of $L_1$ and $L_2$ and $Q$ intersects only one of them, we have that $Q$ contains either $e_1$ or $e_2$ or we can isotope $e_1$ and $e_2$ in a way that $Q$ contains either $e_1$ or $e_2$. But then, either $e_1$ or $e_2$ bound a disk in the complement of $\gamma_i$ and, as in 1(ii) from the proof of Lemma \ref{Qn}, we have a contradiction with $\Gamma_i$ being a knotted handlebody-knot. Consequently, the tangle decomposition defined by $S_2$ contains no local knots and the knots $K_i$ are prime.
\end{proof}

Corollary \ref{cor} is now an immediate consequence.

\begin{proof}[Proof of Corollary \ref{cor}]
In Theorem \ref{main} we proved that the spheres $S_n$, $n\geq 2$, define a $n$-string essential tangle decomposition for the knots $K_i$. Hence, considering the knots $K_i$ connected sum with some other knot, we have infinitely many knots with $n$-string essential tangle decompositions for all $n\in \mathbb{N}$, as in the statement of this corollary.
\end{proof}

\section*{Acknowledgements}
The author thanks to Cameron Gordon for comments along the development of this paper and the refferee for the detailed review and suggestions on the exposition and proofs of the paper.

\end{document}